\documentclass[a4paper,12pt]{amsart}

\usepackage{a4wide,url}
\usepackage{tikz}
\usetikzlibrary{decorations.pathreplacing}
\usetikzlibrary{shapes}

\newcommand{\T}{\mathcal{T}}
\newcommand{\D}{\mathcal{D}}

\newcommand{\EE}{\mathbb{E}}

\newif\ifdetails
\detailstrue
\newcommand{\DETAIL}[1]%
{\ifdetails\par\fbox{\begin{minipage}{0.9\linewidth}\textit{Detail:}
      #1\end{minipage}}\par\fi}
\newcommand{\TODO}[1]%
{\ifdetails\par\fbox{\begin{minipage}{0.9\linewidth}\textbf{TODO:}
      #1\end{minipage}}\par\fi}

\newtheorem{lemma}{Lemma}

\newtheorem{theorem}[lemma]{Theorem}

\newtheorem{claim}{Claim}

\newcommand{\Crt}{\operatorname{crt}}
\newcommand{\Cr}{\operatorname{cr}}
\newcommand{\lca}{\operatorname{lca}}
\newcommand{\inter}{\operatorname{int}}

\newcommand{\old}[1]{{}}
%Marginal Note
%\usepackage[notref,notcite]{showkeys}
\makeatletter
\DeclareRobustCommand{\cev}[1]{%
  {\mathpalette\do@cev{#1}}%
}
\newcommand{\do@cev}[2]{%
  \vbox{\offinterlineskip
    \sbox\z@{$\m@th#1 x$}%
    \ialign{##\cr
      \hidewidth\reflectbox{$\m@th#1\vec{}\mkern4mu$}\hidewidth\cr
      \noalign{\kern-\ht\z@}
      $\m@th#1#2$\cr
    }%
  }%
}
\makeatother

\title{The largest crossing number of tanglegrams}

\author{\'Eva Czabarka, Junsheng Liu, L\'aszl\'o A. Sz\'ekely }
\address{\'Eva Czabarka Junsheng Liu, and L\'aszl\'o A. Sz\'ekely\\ Department of Mathematics \\ University of South Carolina \\ Columbia, SC 29208 \\ USA}
\email{\{czabarka,szekely\}@math.sc.edu, junsheng@email.sc.edu}
\subjclass[2020]{Primary 05C10; secondary 05C05, 05C62,  92B10}
\keywords{trees, subtrees,  tanglegram, graph drawing, planarity, crossing number}

\begin{document}

\begin{abstract}
A tanglegram $\T$ consists of two rooted binary   trees with the same number of leaves,  and a perfect matching between the two leaf sets. In a layout, the tanglegrams is drawn with the leaves on two parallel lines, the trees on either side of the strip created by these lines are drawn as plane trees, and the perfect matching is drawn in straight line segments inside the strip.  The tanglegram crossing number   $\Crt(\T)$ of $\T$ is the smallest number of crossings of pairs of matching edges, over 
all possible layouts of $\T$.  The size of the tanglegram is the number of matching edges, say $n$.
An earlier paper showed that the maximum  of the tanglegram crossing number of size $n$ tanglegrams is $<\frac{1}{2}\binom{n}{2}$; but is at least $\frac{1}{2}\binom{n}{2}-\frac{n^{3/2}-n}{2}$ for infinitely many $n$.
Now we make better bounds:  the maximum crossing number of a size $n$ tanglegram is at most
$ \frac{1}{2}\binom{n}{2}-\frac{n}{4}$, but for infinitely many $n$, at least
$\frac{1}{2}\binom{n}{2}-\frac{n\log_2 n}{4}$.  The problem shows analogy with the Unbalancing Lights Problem
of Gale and Berlekamp.
\end{abstract}

\maketitle

\section{introduction} \label{intro}
A {\em  binary tree} has a root vertex
assumed to be a common ancestor of all other vertices, and each vertex either has two children
 or no children. A vertex with no children is a  {\em leaf}, and a vertex with two
children is an {\em internal vertex}. Note that this definition allows a single-vertex tree that is considered as both root and leaf to be a rooted binary tree.
In an {\em ordered binary tree} an order of the two children is specified,  for every vertex that has children.

A {\em  plane binary tree} is a drawn ordered binary  tree, without edge crossings, where the left-right order of subtrees in the drawing coincides with the order. The edges are drawn in straight line segments.
It is easy to draw a plane binary tree  in such a way that all the leaves are on a line, and all other vertices are in the same open halfplane.

A  {\em tanglegram} $\T=({L},{R},\sigma)$ is a graph that consists of a left  binary tree ${L}$, 
%with root $r$, 
a right  binary tree ${R}$ with the same number of leaves as $L$, %with root $\rho$, 
and a perfect matching  $\sigma$ between the leaves of ${L}$ and ${R}$. Two tanglegrams are considered identical, if there is a graph isomorphism between
them fixing the root $r$ of $R$ and the root $\rho$ of $L$. The {\em size} of a tanglegram is the number of leaves in $L$ (or $R$).
An  {\em abstract tanglegram layout} of the tanglegram    $(L,R,\sigma)$       is given by turning the unordered trees   $L$ and $R$ into {\em ordered trees}. Given an abstract tanglegram layout, an actual {\em tanglegram layout}  consists of a left plane binary tree isomorphic     (keeping order as well) to $L$   with root $r$ drawn in the halfplane $x\leq 0$, having
its leaves on the  line $x=0$,
a right plane binary tree isomorphic  (keeping order as well)  to $R$  with root $\rho$, drawn in the halfplane $x\geq 1$, having
its leaves on the  line $x=1$,    %each with $n$ leaves, 
and a perfect matching  $\sigma$ between their leaves drawn in straight line segments. (Isomorphism of ordered trees (plane trees)
keeps the root and the order.)

Our main concern about tanglegram layouts is the number of crossings between the matching edges. As it is determined by the abstract tanglegram layout, it is  sufficient to focus on the abstract tanglegram layout to count crossings. 
%For counting crossings, we treat tanglegram layouts combinatorially by understanding them as abstract tanglegram layouts.

A {\em switch} on the abstract  tanglegram layout $(L,R,\sigma)$ is the following operation: select an internal vertex $v$ of one of the two trees $L$ and $R$ and change the order of its two children. 

It is easy to see that two abstract  tanglegram layouts represent the same tanglegram if and only if a sequence of switches  moves one abstract layout into the other. (A switch is on a tanglegram layout
 illustrated in Figure~\ref{fig:sw&m}.) Hence tanglegrams of a given size partition the set of all abstract tanglegram layouts of the same size, or equivalently a tanglegram can be seen as an equivalence class of abstract tanglegram layouts.
Note that interchanging $L$ and $R$ is not allowed, as it may result in a different tanglegram.

\begin{figure}[htbp]
\begin{center}
\begin{tikzpicture}[scale=.67]
        \node[fill=black,circle,inner sep=1.5pt]  at (-1,0) {}; %*
        \node[fill=black,circle,inner sep=1.5pt]  at (1,-1) {};
        \node[fill=black,circle,inner sep=1.5pt]  at (1,1) {};
        \node[fill=black,rectangle,inner sep=2pt]  at (2,-1.5) {};
        \node[fill=black,rectangle,inner sep=2pt]  at (2,-.5) {};
        \node[fill=black,rectangle,inner sep=2pt]  at (2,.5) {};
        \node[fill=black,rectangle,inner sep=2pt]  at (2,1.5) {};

	\draw (-1,0)--(2,1.5);
	\draw (-1,0)--(2,-1.5);
	\draw (1,-1)--(2,-.5);
	\draw (1,1)--(2,.5);

        \node[fill=black,rectangle,inner sep=2pt]  at (3,-1.5) {};
        \node[fill=black,rectangle,inner sep=2pt]  at (3,-.5) {};
        \node[fill=black,rectangle,inner sep=2pt]  at (3,.5) {};
        \node[fill=black,rectangle,inner sep=2pt]  at (3,1.5) {};
        \node[fill=black,circle,inner sep=1.5pt]  at (4,1) {};
        \node[fill=black,circle,inner sep=1.5pt]  at (5,.5) {};
        \node[fill=black,circle,inner sep=1.5pt]  at (6,0) {}; %*

	\draw (6,0)--(3,1.5);
	\draw (6,0)--(3,-1.5);
	\draw (5,.5)--(3,-.5);
	\draw (4,1)--(3,.5);

	\draw [dashed] (2,-1.5)--(3,-1.5);
	\draw [dashed] (2,-.5)--(3,.5);
	\draw [dashed] (3,-.5)--(2,.5);
	\draw [dashed] (2,1.5)--(3,1.5);
	
	\node at (2,-1.9) {$a$};
	\node at (3,-1.9) {$a$};
         \node at (2,-.9)  {$b$};
         \node at (3,.1)  {$b$};
         \node at (3,-.9)  {$c$};
         \node at (2,.1)  {$c$};
         \node at  (2,1.1) {$d$};
         \node at (3,1.1)    {$d$};
	\node at (2.5,-2.5) {original layout};
	\node at (-1,0.3) {$r$};
	\node at (6,0.3) {$\rho$};
         
        \node[fill=black,circle,inner sep=1.5pt]  at (7,0) {}; 
        \node[fill=black,circle,inner sep=1.5pt]  at (9,-1) {};
        \node[fill=black,circle,inner sep=1.5pt]  at (9,1) {};
        \node[fill=black,rectangle,inner sep=2pt]  at (10,-1.5) {};
        \node[fill=black,rectangle,inner sep=2pt]  at (10,-.5) {};
        \node[fill=black,rectangle,inner sep=2pt]  at (10,.5) {};
        \node[fill=black,rectangle,inner sep=2pt]  at (10,1.5) {};

	\draw (7,0)--(10,1.5);
	\draw (7,0)--(10,-1.5);
	\draw (9,-1)--(10,-.5);
	\draw (9,1)--(10,.5);

        \node[fill=black,rectangle,inner sep=2pt]  at (11,-1.5) {};
        \node[fill=black,rectangle,inner sep=2pt]  at (11,-.5) {};
        \node[fill=black,rectangle,inner sep=2pt]  at (11,.5) {};
        \node[fill=black,rectangle,inner sep=2pt]  at (11,1.5) {};
        \node[fill=black,circle,inner sep=1.5pt]  at (12,0) {};
        \node[fill=black,circle,inner sep=1.5pt]  at (13,-.5) {};
        \node[fill=black,circle,inner sep=1.5pt]  at (14,0) {}; 

	\draw (14,0)--(11,1.5);
	\draw (14,0)--(11,-1.5);
	\draw (13,-.5)--(11,.5);
	\draw (12,0)--(11,-.5);

	\draw [dashed] (10,-1.5)--(11,1.5);
	\draw [dashed] (10,-.5)--(11,-.5);
	\draw [dashed] (10,.5)--(11,-1.5);
	\draw [dashed] (10,1.5)--(11,.5);
	
	\node at (10,-1.9) {$a$};
	\node at (10,-.9) {$b$};
	\node at (10,.1) {$c$};
	\node at (10,1.1) {$d$};
	\node at (11,1.1) {$a$};
	\node at (11,0.1) {$d$};
	\node at (11,-.9) {$b$};
	\node at (11,-1.9) {$c$};
	\node at (10.5,-2.5) {after switching at $\rho$};
	\node at (7,0.3) {$r$};
	\node at (14,0.3) {$\rho$};

\old{
        \node[fill=black,circle,inner sep=1.5pt]  at (15,0) {};
        \node[fill=black,circle,inner sep=1.5pt]  at (17,-1) {};
        \node[fill=black,circle,inner sep=1.5pt]  at (17,1) {};
        \node[fill=black,rectangle,inner sep=2pt]  at (18,-1.5) {};
        \node[fill=black,rectangle,inner sep=2pt]  at (18,-.5) {};
        \node[fill=black,rectangle,inner sep=2pt]  at (18,.5) {};
        \node[fill=black,rectangle,inner sep=2pt]  at (18,1.5) {};
	\draw (15,0)--(18,1.5);
	\draw (15,0)--(18,-1.5);
	\draw (17,-1)--(18,-.5);
	\draw (17,1)--(18,.5);
        \node[fill=black,rectangle,inner sep=2pt]  at (19,-1.5) {};
        \node[fill=black,rectangle,inner sep=2pt]  at (19,-.5) {};
        \node[fill=black,rectangle,inner sep=2pt]  at (19,.5) {};
        \node[fill=black,rectangle,inner sep=2pt]  at (19,1.5) {};
        \node[fill=black,circle,inner sep=1.5pt]  at (20,1) {};
        \node[fill=black,circle,inner sep=1.5pt]  at (21,.5) {};
        \node[fill=black,circle,inner sep=1.5pt]  at (22,0) {};
	\draw (22,0)--(19,1.5);
	\draw (22,0)--(19,-1.5);
	\draw (21,.5)--(19,-.5);
	\draw (20,1)--(19,.5);
	\draw [dashed] (18,-1.5)--(19,1.5); 
	\draw [dashed] (18,-.5)--(19,-.5); 
	\draw [dashed] (19,.5)--(18,.5);
	\draw [dashed] (18,1.5)--(19,-1.5);
	\node at (18,-1.9) {$d$};
	\node at (19,-1.9) {$a$};
         \node at (19,-.9)  {$c$};
         \node at (19,.1)  {$b$};
         \node at (18,-.9)  {$c$};
         \node at (18,.1)  {$b$};
         \node at  (18,1.1) {$a$};
         \node at (19,1.1)    {$d$};
         \node at (18.5,-2.5) {after mirroring at $r$};
         \node at (15,0.3) {$r$};
	\node at (22,0.3) {$\rho$};
  }        
        
\end{tikzpicture}
\end{center}
\caption{Result of a switch 
operation. 
}  \label{fig:sw&m}
\end{figure}
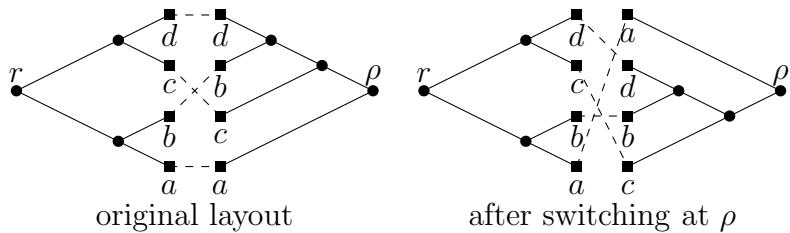

The crossing number of a tanglegram layout is the number of  pairs of matching edges that cross, which is determined by the 
abstract tanglegram layout.

 It is desirable to draw a tanglegram with the {\em least possible number of crossings}, which is known as the 
 Tanglegram Layout Problem \cite{fernau, St.John}. The {\em (tanglegram) crossing number}  $\Crt(\T)$ of a tanglegram $\T$  is defined as the minimum number of crossings among its layouts. 
The  Tanglegram Layout Problem  problem is NP-hard \cite{buchin,fernau}, but is Fixed Parameter Tractable \cite{buchin, bocker}. It does not allow constant factor approximation under the Unique Game Conjecture \cite{buchin}. 
Tanglegrams play a major role in phylogenetics, especially in the theory of cospeciation \cite{page}. 
For example, the first binary tree is the phylogenetic tree of the hosts, the second binary tree is the phylogenetic tree of their parasites (e.g., gopher and louse), and the matching connects the host with its parasite
\cite{HafnerNadler}. 
 The tanglegram  crossing number  has been related to the  number of times  parasites switched hosts \cite{HafnerNadler},
or, working with gene trees instead of phylogenetic trees, to the 
number of horizontal gene transfers (\cite{Burt}, pp. 204--206).
Tanglegrams are well-studied objects in phylogenetics and computer science. 

Let $M_n$ denote $\max_\T \Crt(\T)$ among size $n$ tanglegrams.
It is easy to see that for any tanglegram, the expected number of crossing in random layout of any fixed labeled tangegram of size $n$ is  $\frac{1}{2}\binom{n}{2}$. (For details, see Section~\ref{switches}.)
Therefore, $M _n \leq \frac{1}{2}\binom{n}{2}$. 
An earlier paper \cite{MRCtangle} made a slight improvent showing that equality cannot happen:  $M_n<\frac{1}{2}\binom{n}{2}$; 
and also showed that for every $n=2^k$,
$$\frac{1}{2}\binom{n}{2}-\frac{n^{3/2}-n}{2}\leq M_n.$$
The goal of this paper is to find more  proper separation as $M_n\leq \frac{1}{2}\binom{n}{2}-\frac{n}{4} ,$ and for infinitely many $n$,
$$M_n\geq \frac{1}{2}\binom{n}{2}-\frac{n\log_2 n}{4}.$$

In Section~\ref{evaexample} we provide a construction for the lower bound, 
in Section~\ref{switches} we relate the number of crossings in different layouts of the tanglegram.
In Section~\ref{unbalance} we relate the largest crossing number problem  
to the  Unbalancing Lights Problem
of Gale and Berlekamp, and show the separation from $\frac{1}{2}\binom{n}{2}$.  In Section~\ref{tools} we derive some technical results that we need for the proof.

\section{A construction for  tanglegrams with large crossing number}
\begin{theorem} \label{evaexample}
For every $i\geq 1$,
there exists a tanglegram of size $2^i$, which has tanglegram crossing number $\frac{1}{2}\binom{2^i}{2}-i2^{i-2}$ exactly.
\end{theorem}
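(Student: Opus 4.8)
The plan is to exhibit the extremal tanglegram explicitly and then count crossings by grouping pairs of matching edges according to their lowest common ancestors. Set $n=2^i$ and take both $L$ and $R$ to be the complete binary tree of depth $i$, so that each leaf is addressed by a string in $\{0,1\}^i$ (reading a left child as $0$ and a right child as $1$). For the matching I would use the \emph{bit-reversal} permutation: the leaf $b_1b_2\cdots b_i$ of $L$ is matched to the leaf $b_i\cdots b_2b_1$ of $R$. An (abstract) layout of this tanglegram is nothing but a choice of a switch value $\xi_s\in\{+1,-1\}$ at every internal vertex $s$ of $L$ and $\eta_t\in\{+1,-1\}$ at every internal vertex $t$ of $R$, and the vertical order of any two leaves is governed solely by the switch at their $\lca$.

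The first step is the standard observation that two matching edges $e,e'$ cross if and only if the order induced at $s=\lca$ of their left endpoints is opposite to the order induced at $t=\lca$ of their right endpoints; thus whether $e,e'$ cross depends only on $(s,t)$, on whether the pair is \emph{parallel} (its $s_L$-side edge lands in $t_L$) or \emph{anti-parallel}, and on whether $\xi_s=\eta_t$. I would therefore partition all $\binom{n}{2}$ edge pairs by the pair $(s,t)$ of their two $\lca$'s and compute, for the bit-reversal matching, the numbers $P(s,t)$ and $A(s,t)$ of parallel and anti-parallel pairs. Writing $d=\mathrm{depth}(s)$ and $d'=\mathrm{depth}(t)$, the outcome I expect is a clean trichotomy: if $d+d'>i-1$ there are no such pairs at all; if $d+d'<i-1$ then $P(s,t)=A(s,t)$; and if $d+d'=i-1$ there is exactly one such pair, and it is parallel.

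Granting this trichotomy the count is immediate. Whenever $d+d'\neq i-1$, the pairs with $\lca$'s $(s,t)$ contribute exactly half of their number to the crossing total in \emph{every} layout, since the parallel pairs cross precisely when the anti-parallel ones do not. When $d+d'=i-1$ the unique parallel pair crosses iff $\xi_s\neq\eta_t$. Because a complete binary tree of depth $i$ has $2^d$ internal vertices of depth $d$, the number of vertex pairs with $d+d'=i-1$ is $\sum_{d=0}^{i-1}2^d2^{\,i-1-d}=i\,2^{\,i-1}$, accounting for $i\,2^{\,i-1}$ of the $\binom{n}{2}$ edge pairs. Hence the number of crossings in any layout equals
\[
\frac12\left(\binom{n}{2}-i\,2^{\,i-1}\right)+\bigl|\{(s,t):d+d'=i-1,\ \xi_s\neq\eta_t\}\bigr|.
\]
The correction term is nonnegative and vanishes for the layout with no switches (all $\xi_s=\eta_t=+1$), so the minimum over all layouts is exactly $\tfrac12\binom{n}{2}-i\,2^{\,i-2}$. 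This yields the upper and lower bounds on $\Crt$ simultaneously, which is what the ``exactly'' in the statement requires.

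The heart of the argument, and the step I expect to demand the most care, is the trichotomy of the second paragraph. For the bit-reversal matching each quadrant count $m(s_\bullet,t_\bullet)$ is a power of two determined by how the \emph{selector} bit of $s$ (address position $d+1$) relates to the selector bit that $t$ reads off the \emph{reversed} address (position $i-d'$): these two positions coincide exactly when $d+d'=i-1$, which forces $A(s,t)=0$ and leaves a single fully determined parallel pair; when $d+d'>i-1$ the selector of $s$ already lies in the block that $t$ fixes, so one child of $s$ maps entirely away from $t$ and no pair survives; and when $d+d'<i-1$ all four counts are equal, giving $P=A$. I would verify this by directly bookkeeping which of the $i$ address bits are constrained in each quadrant. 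A secondary point to pin down is the additivity of crossings over $\lca$-pairs with the stated parity rule; this is the switch-based accounting of Section~\ref{switches}, which I would either invoke directly or reprove in the special form needed here. (This same ``even split except on an anti-diagonal'' structure is also what ties the computation to the Unbalancing Lights / Gale--Berlekamp framework used later.)
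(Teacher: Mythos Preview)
Your argument is correct and takes a genuinely different route from the paper.  The paper proves the theorem by induction on $i$: Claim~\ref{cl:lower} establishes the recursive lower bound $\Crt(\T_i)\ge 2\Crt(\T_{i-1})+\binom{2^{i-1}}{2}$ by locating, in an arbitrary optimal layout, at least one ``cross-tree'' crossing for every unordered pair of words in $X^{i-1}$; the companion claim then verifies that the natural layout $\D_i^\star$ satisfies the matching recursion $\omega_i=2\omega_{i-1}+\binom{2^{i-1}}{2}$, and the induction closes.  You instead compute the number of crossings in \emph{every} layout simultaneously via the $\lca$-pair decomposition, proving the trichotomy $P(s,t)=A(s,t)$ off the anti-diagonal $d+d'=i-1$ and $P=1,\ A=0$ on it.  This yields the closed form $\tfrac12\bigl(\binom{n}{2}-i2^{i-1}\bigr)+\bigl|\{(s,t):d+d'=i-1,\ \xi_s\ne\eta_t\}\bigr|$ directly, so the lower and upper bounds fall out at once with no induction.

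What each approach buys: the paper's inductive proof is self-contained and does not need the full $\lca$-matrix accounting; its lower-bound step is a short pigeonhole-flavoured argument about four sibling leaves.  Your approach is more structural: it makes transparent exactly which edge pairs are ``rigid'' (forced to cross or not in every layout) and which are ``flexible'', and the observation that the flexible pairs sit precisely on the anti-diagonal $d+d'=i-1$ is exactly the link to the $a_{xu}$ matrix of Section~\ref{unbalance} and the Gale--Berlekamp analogy that the paper alludes to but does not spell out for this construction.  The trichotomy itself is straightforward to verify (the two ``selector'' positions $d+1$ and $i-d'$ coincide, overlap with a fixed block, or are independent, according as $d+d'$ equals, exceeds, or is less than $i-1$), so there is no hidden difficulty in the step you flag as needing the most care.
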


Let $X=\{0,1\}$ and let $X^i$ be the set of binary strings  of length $i$, i.e., words over the alphabet $X$, which make the binary representations of the non-negative integers that are less than $2^i$.
Given a string  $\vec{x}=x_1x_2\ldots x_i$,  we will denote by $\cev{x}$ the string obtained by reversing $x$, i.e.,
$\cev{x}=x_ix_{i-1}\ldots x_1$. 

For every $i\in\mathbb{N}$ we will define a   tanglegram $\mathcal{T}_i=(R^{(i)},L^{(i)},\sigma_i)$ of size $2^i$  by the following procedure:

Both $L^{(i)}$ and $R^{(i)}$ are the rooted complete binary trees of height $i$. We label the vertices of $L^{(i)}$ (resp. of $R^{(i)}$) as follows: The set of vertices at distance $j$ (which we call the $j^{th}$ layer)
from the root
are labeled as $u_{\vec{x}}$ (resp. $w_{\vec{x}}$) where $\vec{x}$ is an element of $X^j$. The root  of  $L^{(i)}$)  is labeled as $u_{\epsilon}$, and the root of $R^{(i)}$)  is labeled as $v_{\epsilon}$, where $\epsilon$ is the empty string.
The labels of children of $u_{\vec{x}}$ (resp. $w_{\vec{x}}$) are created by suffices:   $u_{\vec{x}0}$ and $u_{\vec{x}1}$ (resp. $w_{\vec{x}0}$ and $w_{\vec{x}1}$). The matching is 
$\sigma_i=\{u_{\vec{x}}w_{\cev{x}}:\vec{x}\in X^{i}\}$, see Fig.~\ref{fig:Evatangs}.

For $t\in X$, let $L^{(i)}_{t}$ (resp. $R^{(i)}_{t}$) denote the subtree of $L^{(i)}$ (resp. $R^{(i)}$) rooted at $u_{t}$ (resp. $w_{t}$).

\begin{figure}[ht]
		\centering
			\begin{tikzpicture}
			[scale=1,xscale=1,inner sep=1pt,semithick,font=\tiny,
			vertex/.style={rounded rectangle,draw,minimum height=3mm},
			thickedge/.style={line width=0.75pt},
			plumpedge/.style={gray!60,line width=4pt},
			] 
\node[vertex] (x000) at (-7,2.1) {};
\node[vertex] (y000) at (-6,2.1) {};
% MATCHING
\draw[thickedge] (x000)--(y000);
%%%%%%%%%%%%%%%
\node[vertex] (x010) at (-7,1.2) {0};
\node[vertex] (x011) at (-7,0.6) {1};
\node[vertex] (y010) at (-6,1.2) {0};
\node[vertex] (y011) at (-6,0.6) {1};
\node[vertex] (x01) at (-7.5,0.9) {};
\node[vertex] (y01) at (-5.5,0.9) {};
% MATCHING
\draw[thickedge] (x010)--(y010);
\draw[thickedge] (x011)--(y011);
%%%%%%%%%%%%%
\node[vertex] (x100) at (-7,-0.3) {00};
\node[vertex] (x101) at (-7,-0.9) {01};
\node[vertex] (x110) at (-7,-1.5) {10};
\node[vertex] (x111) at (-7,-2.1) {11};
\node[vertex] (y100) at (-6,-0.3) {00};
\node[vertex] (y101) at (-6,-0.9) {01};
\node[vertex] (y110) at (-6,-1.5) {10};
\node[vertex] (y111) at (-6,-2.1) {11};
\node[vertex] (x10) at (-7.5,-0.6) {0};
\node[vertex] (x11) at (-7.5,-1.8) {1};
\node[vertex] (y10) at (-5.5,-0.6) {0};
\node[vertex] (y11) at (-5.5,-1.8) {1};
\node[vertex] (x1) at (-8.5,-1.2) {};
\node[vertex] (y1) at (-4.5,-1.2) {};
\draw[thickedge] (x1)--(x11)--(x111);
\draw[thickedge] (y111)--(y11)--(y1);
\draw[thickedge] (x11)--(x110);
\draw[thickedge] (y11)--(y110);
\draw[thickedge] (x01)--(x010);
\draw[thickedge] (y01)--(y010);
\draw[thickedge] (x1)--(x10)--(x101);
\draw[thickedge] (y1)--(y10)--(y101);
\draw[thickedge] (x01)--(x011);
\draw[thickedge] (y01)--(y011);
\draw[thickedge] (x10)--(x100);
\draw[thickedge] (y10)--(y100);
% MATCHING
\draw[thickedge] (x100)--(y100);
\draw[thickedge] (x101)--(y110);
\draw[thickedge] (x110)--(y101);
\draw[thickedge] (x111)--(y111);
%%%%
\node[vertex] (u000) at (0,2.1) {000};
\node[vertex] (u001) at (0,1.5) {001};
\node[vertex] (u010) at (0,0.9) {010};
\node[vertex] (u011) at (0,0.3) {011};
\node[vertex] (u100) at (0,-0.3) {100};
\node[vertex] (u101) at (0,-0.9) {101};
\node[vertex] (u110) at (0,-1.5) {110};
\node[vertex] (u111) at (0,-2.1) {111};
\node[vertex] (w000) at (1.5,2.1) {000};
\node[vertex] (w001) at (1.5,1.5) {001};
\node[vertex] (w010) at (1.5,0.9) {010};
\node[vertex] (w011) at (1.5,0.3) {011};
\node[vertex] (w100) at (1.5,-0.3) {100};
\node[vertex] (w101) at (1.5,-0.9) {101};
\node[vertex] (w110) at (1.5,-1.5) {110};
\node[vertex] (w111) at (1.5,-2.1) {111};
\node[vertex] (u00) at (-.5,1.8) {00};
\node[vertex] (u01) at (-.5,0.6) {01};
\node[vertex] (u10) at (-0.5,-0.6) {10};
\node[vertex] (u11) at (-0.5,-1.8) {11};
\node[vertex] (w00) at (2,1.8) {00};
\node[vertex] (w01) at (2,0.6) {01};
\node[vertex] (w10) at (2,-0.6) {10};
\node[vertex] (w11) at (2,-1.8) {11};
\node[vertex] (u0) at (-1.5,1.2) {0};
\node[vertex] (u1) at (-1.5,-1.2) {1};
\node[vertex] (w0) at (3,1.2) {0};
\node[vertex] (w1) at (3,-1.2) {1};
\node[vertex] (ue) at (-3.5,0) {};
\node[vertex] (we) at (5,0) {};
\draw[thickedge] (u000)--(u00)--(u0)--(ue)--(u1)--(u11)--(u111);
\draw[thickedge] (w111)--(w11)--(w1)--(we)--(w0)--(w00)--(w000);
\draw[thickedge] (u00)--(u001);
\draw[thickedge] (w00)--(w001);
\draw[thickedge] (u11)--(u110);
\draw[thickedge] (w11)--(w110);
\draw[thickedge] (u0)--(u01)--(u010);
\draw[thickedge] (w0)--(w01)--(w010);
\draw[thickedge] (u1)--(u10)--(u101);
\draw[thickedge] (w1)--(w10)--(w101);
\draw[thickedge] (u01)--(u011);
\draw[thickedge] (w01)--(w011);
\draw[thickedge] (u10)--(u100);
\draw[thickedge] (w10)--(w100);
% MATCHING
\draw[thickedge] (u000)--(w000);
\draw[thickedge] (u001)--(w100);
\draw[thickedge] (u010)--(w010);
\draw[thickedge] (u011)--(w110);
\draw[thickedge] (u100)--(w001);
\draw[thickedge] (u101)--(w101);
\draw[thickedge] (u110)--(w011);
\draw[thickedge] (u111)--(w111);
\end{tikzpicture}
\caption{The tanglegrams $\T_i$ for $i\in\{0,1,2,3\}$. The vertices are labeled with their indices as in the text and the tanglegrams are shown with
a crossing-optimal layout.}
\label{fig:Evatangs}
\end{figure}
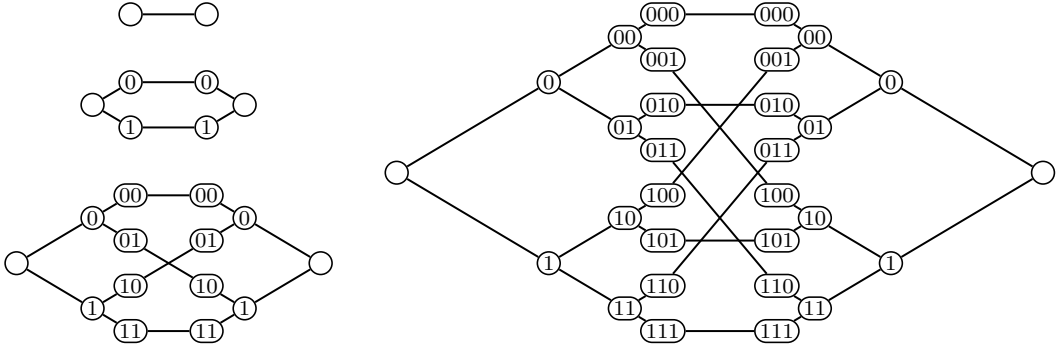

\begin{claim}\label{cl:lower}
For every $i\ge 2$, $\Crt(\mathcal{T}_i)\ge2\Crt(\mathcal{T}_{i-1})+\binom{2^{i-1}}{2}$.
\end{claim}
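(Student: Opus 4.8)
The plan is to fix an arbitrary layout of $\T_i$ and lower-bound its number of crossings by splitting the matching edges according to which of the two top subtrees of $L^{(i)}$ their left endpoint lies in. Write $A$ for the set of matching edges incident to a leaf of $L^{(i)}_0$ and $B$ for those incident to a leaf of $L^{(i)}_1$; there are $2^{i-1}$ edges in each. Every crossing pair of edges is then of exactly one of three types: both in $A$, both in $B$, or one in each. Since these three families are disjoint and exhaust all crossing pairs, it suffices to show that the within-$A$ crossings number at least $\Crt(\T_{i-1})$, the within-$B$ crossings at least $\Crt(\T_{i-1})$, and the cross crossings at least $\binom{2^{i-1}}{2}$.

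For the first two estimates I would argue that the edges of $A$, with the orders induced by the given layout, form a layout of a tanglegram isomorphic to $\T_{i-1}$. The leaves of $L^{(i)}_0$ are exactly the $u_{0\vec{y}}$ with $\vec{y}\in X^{i-1}$, so $L^{(i)}_0$ is a complete binary tree of height $i-1$ indexed by $\vec{y}$. Since $\cev{0\vec{y}}=\cev{y}0$, the right endpoints of $A$ are precisely the leaves of $R^{(i)}$ whose label ends in $0$; suppressing the now degree-two level-$(i-1)$ vertices turns the top $i-1$ levels of $R^{(i)}$ into a complete binary tree of height $i-1$, in which the leaf formerly called $w_{\cev{y}0}$ is identified with the level-$(i-1)$ vertex $w_{\cev{y}}$. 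Under these identifications the matching $u_{0\vec{y}}\mapsto w_{\cev{y}0}$ becomes $\vec{y}\mapsto\cev{y}$, the defining matching of $\T_{i-1}$. The point that needs care is that the order the layout induces on the leaves ending in $0$ ranges over exactly the admissible plane orders of this contracted tree (switches at the suppressed vertices do not move these leaves, while switches higher up correspond to switches of the contracted tree), so that the induced configuration is a genuine layout of $\T_{i-1}$ and therefore carries at least $\Crt(\T_{i-1})$ crossings. The same argument applied to leaves ending in $1$ handles $B$.

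The heart of the matter is the cross term. The left endpoints of $A$ and of $B$ occupy the two contiguous blocks of $L^{(i)}_0$ and $L^{(i)}_1$, one entirely above the other; assume $L^{(i)}_0$ is the upper block, the other case being symmetric. Then an edge of $A$ crosses an edge of $B$ if and only if the right endpoint of the $A$-edge (a leaf ending in $0$) lies below the right endpoint of the $B$-edge (a leaf ending in $1$). As the map sending an edge to its right endpoint is a bijection from $A$ onto the $0$-leaves and from $B$ onto the $1$-leaves, the number of cross crossings equals the number of pairs $(p,q)$ with $p$ a $0$-leaf lying below a $1$-leaf $q$. I would then split this count by whether $p$ and $q$ are siblings: each level-$(i-1)$ vertex $w_{\vec{z}}$ contributes one consecutive sibling pair $\{w_{\vec{z}0},w_{\vec{z}1}\}$, while the subtrees at distinct level-$(i-1)$ vertices occupy disjoint blocks that are linearly ordered. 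For two distinct such vertices there are exactly two mixed pairs, the upper block's $0$-leaf with the lower block's $1$-leaf, and the lower block's $0$-leaf with the upper block's $1$-leaf, and precisely one of them has its $0$-leaf below its $1$-leaf; hence each of the $\binom{2^{i-1}}{2}$ unordered pairs of vertices contributes exactly one counted pair, and the sibling pairs contribute a further nonnegative amount. Thus the count is at least $\binom{2^{i-1}}{2}$, and the symmetric computation gives the same bound when $L^{(i)}_1$ is the upper block.

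Adding the three bounds gives $\Crt(\T_i)\ge 2\Crt(\T_{i-1})+\binom{2^{i-1}}{2}$, since the layout was arbitrary. I expect the main obstacle to be the bookkeeping in the second step: verifying rigorously that restricting an admissible plane order of $R^{(i)}$ to the $0$-leaves produces precisely the admissible orders of the height-$(i-1)$ contracted tree, so that the within-$A$ crossings are genuinely bounded below by $\Crt(\T_{i-1})$ and not merely by the crossing number of some more constrained object. By contrast, the cross-term count becomes combinatorially clean once one observes that sibling $0/1$ pairs are consecutive and that distinct level-$(i-1)$ subtrees form linearly ordered blocks.
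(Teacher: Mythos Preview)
Your argument is correct and follows essentially the same strategy as the paper: split the crossings of an arbitrary layout according to the top-level decomposition of $L^{(i)}$, bound each intra-subtree contribution by $\Crt(\T_{i-1})$ via the induced sublayout of a copy of $\T_{i-1}$, and bound the cross contribution by $\binom{2^{i-1}}{2}$ using that the sibling pairs $w_{\vec z 0},w_{\vec z 1}$ are consecutive in any layout. Your reduction of the cross count to the number of $(0\text{-leaf},1\text{-leaf})$ inversions on the right side is a slightly cleaner packaging of the same pairing argument the paper carries out edge by edge, and your worry about the second step is unnecessary: you only need the induced order on the $0$-leaves to be \emph{some} admissible plane order of the contracted tree (which it clearly is, being determined by the switches at levels $0,\dots,i-2$), not that every such order arises.
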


\begin{proof}
Let $\mathcal{T}=(L,R,\sigma)$  be an arbitrary tanglegram, and  $v$ be a non-leaf vertex of one of the trees $L,R$. Let  $Z$ be the set of leaves in the tree, where $v$ lives ($L$ or $R$) that are descendants of $v$.
Note that in any layout of $\mathcal{T}$, the elements of $Z$ appear consecutively in the sequence of leaves. Moveover, if both children of $v$ are leaves,
and the matching edges incident upon these children cross in the layout, then switching the order of these children in the layout eliminates this crossing and decreases the crossing number, so the original layout was not optimal.

Assume $i\ge 2$, and let $\mathcal{D}$ be an optimal layout of $\mathcal{T}_i$.  
Let $A$ be the number of crossings in $\mathcal{D}$ between edges incident upon leaves  $u_{\vec{x}}$ and $u_{\vec{y}}$ where  the first digits of $\vec{x}$ and $\vec{y}$ are the same 
and let $B$ be the the number of crossings in $\mathcal{D}$ between edges incident upon leaves $u_{\vec{x}}$ and $u_{\vec{y}}$ where the first digit of $\vec{x}$ and $\vec{y}$ differ. 
Obviously, $\Crt(\mathcal{T}_i)=A+B$. 
As for each $t\in X$ the matching edges incident upon leaves of $L^{(i)}_t$ induce a  $\mathcal{T}_{i-1}$ with a sublayout in $\mathcal{D}$ with at least $\Crt(\mathcal{T}_{i-1})$ crossings,  
$A\ge 2\Crt(\mathcal{T}_{i-1})$, so it is enough to show that $B\ge \binom{2^{i-1}}{2}$.

Let $t,s\in X$ be chosen such that $u_{t}$ lies above $u_{s}$ in the layout $\mathcal{D}$. Let $\vec{x},\vec{y}\in X^{i-1}$ be different words. Clearly,
$w_{\vec{x}0}, w_{\vec{x}1},w_{\vec{y}0},w_{\vec{y}1}$ are distinct leaves of $R^{(i)}$, and
$u_{0\cev{x}},u_{1\cev{x}},u_{0\cev{y}},u_{1\cev{y}}$ are distinct leaves of $L^{(i)}$. Also, the leaves $w_{\vec{x}0}, w_{\vec{x}1}$ as well as $w_{\vec{y}0},w_{\vec{y}1}$ 
are consecutive in any layout, including $D$.

We may assume without loss of generality that $u_{t\cev{x}}$ is above $u_{t\cev{y}}$ in $\mathcal{D}$.  As $u_{t}$ lies above $u_{s}$, 
both $u_{s\cev{x}},u_{s\cev{y}}$ lie below $u_{t\cev{y}}$.
If the pair  $w_{\vec{x}0}, w_{\vec{x}1}$ lies above the pair $w_{\vec{y}0},w_{\vec{y}1}$,  then the matching edges
incident upon $u_{s\cev{y}}$ and $u_{t\cev{x}}$ cross; otherwise the matching edges incident upon $u_{t\cev{y}}$ and $u_{s\cev{x}}$ cross.
 This shows that for any $\vec{x},\vec{y}\in X^{i-1}$, if $\vec{x}\ne\vec{y}$, then for some $k,\ell$ such that
$\{k,\ell\}=\{0,1\}$ we have that the matching edges incident upon $u_{k\vec{x}}$ and $u_{\ell\vec{y}}$ cross in $\mathcal{D}$. Therefore we have  $B\ge\binom{2^{i-1}}{2}$.
\end{proof}

\begin{claim}
Let $\mathcal{D}_i^{\star}$ be the layout of $\mathcal{T}_i$, in which the leaf labels from top to bottom appear in the order of the integers corresponding to the binary words,  both in $L^{(i)}$ and $R^{(i)}$. (See {\rm Fig.~\ref{fig:Evatangs}} for this layout.)
Let $\Cr(\mathcal{D}_i^{\star})$ denote the number of crossings in this layout.
Then,  for all $i\in\mathbb{N}$, we have 
$$\Cr(\mathcal{D}_i^{\star})=\Crt(\mathcal{T}_i)=\frac{1}{2}\binom{2^i}{2}-i2^{i-2}.$$
\end{claim}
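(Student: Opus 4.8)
The plan is to sandwich the quantity between the lower bound already furnished by Claim~\ref{cl:lower} and the value of the explicit layout $\mathcal{D}_i^\star$. Write $\Phi_i:=\frac12\binom{2^i}{2}-i2^{i-2}=2^{2i-2}-(i+1)2^{i-2}$. A one-line computation gives $\Phi_0=\Phi_1=0$ and, crucially, that $\Phi_i$ satisfies the \emph{same} recursion as the bound in Claim~\ref{cl:lower}, with equality:
$$\Phi_i=2\Phi_{i-1}+\binom{2^{i-1}}{2}\qquad(i\ge 2).$$
Since $\Crt(\mathcal{T}_i)\le\Cr(\mathcal{D}_i^\star)$ by definition, it suffices to prove $\Crt(\mathcal{T}_i)\ge\Phi_i$ and $\Cr(\mathcal{D}_i^\star)\le\Phi_i$; the recursion will then force all three to coincide.

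For the lower bound I would induct on $i$. The base cases $i=0,1$ are immediate ($\mathcal{T}_0$ is a single matched leaf and $\mathcal{T}_1$ has the parallel matching $u_0w_0,u_1w_1$, so both have crossing number $0=\Phi_0=\Phi_1$). For $i\ge2$, Claim~\ref{cl:lower} together with the inductive hypothesis gives $\Crt(\mathcal{T}_i)\ge 2\Crt(\mathcal{T}_{i-1})+\binom{2^{i-1}}{2}\ge 2\Phi_{i-1}+\binom{2^{i-1}}{2}=\Phi_i$.

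The real content is evaluating $\Cr(\mathcal{D}_i^\star)$ exactly, and I would do this by reproducing the $A/B$ decomposition of Claim~\ref{cl:lower} inside the concrete layout. Split the crossings into those between matching edges whose left endpoints share their first digit ($A$) and those whose left endpoints differ in the first digit ($B$). For $A$: restricting $\mathcal{D}_i^\star$ to $L^{(i)}_0$ (resp.\ $L^{(i)}_1$) leaves the left order sorted by the value of the last $i-1$ digits, and sends the partners to exactly the right leaves ending in $0$ (resp.\ $1$), again sorted by value once the fixed last digit is deleted; hence each restriction is precisely a copy of $\mathcal{D}_{i-1}^\star$, giving $A=2\Cr(\mathcal{D}_{i-1}^\star)$. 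For $B$ I would compute positions directly: in $\mathcal{D}_i^\star$ the leaf $u_{\vec x}$ sits at height equal to the integer value of $\vec x$ and its partner $w_{\cev x}$ at height the value of $\cev{x}$, so two matching edges cross if and only if bit reversal inverts the corresponding pair. From this one checks that the $2^{i-1}$ ``same-suffix'' pairs $(u_{0\vec a},u_{1\vec a})$ never cross, and that for distinct $\vec x\ne\vec y$ exactly one of the two pairs $(u_{0\vec x},u_{1\vec y})$, $(u_{0\vec y},u_{1\vec x})$ crosses, according to whether the value of $\cev{x}$ exceeds that of $\cev{y}$ or not. Summing over the $\binom{2^{i-1}}{2}$ unordered suffix pairs yields $B=\binom{2^{i-1}}{2}$ exactly, so $\Cr(\mathcal{D}_i^\star)=2\Cr(\mathcal{D}_{i-1}^\star)+\binom{2^{i-1}}{2}$, and induction with base $\Cr(\mathcal{D}_0^\star)=\Cr(\mathcal{D}_1^\star)=0$ gives $\Cr(\mathcal{D}_i^\star)=\Phi_i$.

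The main obstacle is the \emph{tightness} of $B$: Claim~\ref{cl:lower} only shows each distinct suffix pair forces \emph{at least} one crossing, whereas for the upper bound I must verify that $\mathcal{D}_i^\star$ wastes nothing---each distinct suffix pair contributes \emph{exactly} one crossing and the same-suffix pairs contribute none. Equivalently, $\Cr(\mathcal{D}_i^\star)$ is the inversion number of the bit-reversal permutation on $2^i$ symbols, and the work is to confirm that this number meets the Claim~\ref{cl:lower} recursion with equality. Once $\Phi_i\le\Crt(\mathcal{T}_i)\le\Cr(\mathcal{D}_i^\star)=\Phi_i$ is in hand, all three quantities are equal, which is the assertion.
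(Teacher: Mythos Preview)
Your proposal is correct and follows essentially the same approach as the paper: both establish the recursion $\Cr(\mathcal{D}_i^\star)=2\Cr(\mathcal{D}_{i-1}^\star)+\binom{2^{i-1}}{2}$ by splitting crossings according to whether the left endpoints lie in the same or different subtrees $L^{(i)}_t$, verify that the cross-subtree contribution is exactly $\binom{2^{i-1}}{2}$ via the bit-reversal/position computation, and then sandwich with Claim~\ref{cl:lower}. Your explicit mention of the same-suffix pairs and the inversion-number interpretation is a nice bit of extra clarity, but the logical structure is the same.
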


\begin{proof} 
Set $\omega_i=\Cr(\mathcal{D}_i^{\star})$. We will show the statement by induction on $i$, with base cases $i\in\{0,1\}$.

$\T_0$ and $\T_1$ are the unique planar tanglegrams of size $1$ and $2$ respectively, 
$\frac{1}{2}\binom{2^0}{2}-0\cdot 2^{0-2}=0=\Crt(\mathcal{T}_0)$ and $\frac{1}{2}\binom{2^1}{2}-1\cdot 2^{1-2}=0=\Crt(\mathcal{T}_1)$.
Since for $i\in\{0,1\}$ we have $\vec{x}=\cev{x}$ for any $\vec{x}\in X^i$, $\mathcal{D}_i^{\star}$ is a planar layout, so $\omega_0=\omega_1=0$.
Thus, the statement is true
for $i\in\{0,1\}$.

Assume now $i>1$, and consider the layout $\mathcal{D}_i^{\star}$.  For each $t\in X$, the matching edges incident upon a leaf of $L^{i}_t$ 
induce a drawing of a subtanglegram of $\mathcal{T}_{i-1}$ that is isomorphic to $\mathcal{D}_{i-1}^{\star}$, contributing exactly $2\omega_{i-1}$ crossings.
We want to count the number of crossings in $\mathcal{D}_i^{\star}$ between matching edges whose left-endpoints are $u_{0\cev{x}},u_{1\cev{y}}$, where $\vec{x},\vec{y}\in X^{i-1}$.
The edges cross precisely when $\vec{y}1<\vec{x}0$, which is equivalent with $\vec{y}<\vec{x}$  (where we consider the words as binary representations of numbers). So we have exactly one such crossings
for each unordered pair $\vec{x},\vec{y}$ from $X^{i-1}$.
By the induction hypothesis and Claim~\ref{cl:lower} we have
$$\Crt(\mathcal{T}_i)\le \omega_i=2\omega_{i-1}+\binom{2^{i-1}}{2}=2\Crt(\mathcal{T}_{i-1})+\binom{2^{i-1}}{2}\le \Crt(\mathcal{T}_i),$$
which gives $\Crt(\mathcal{T}_i)=\omega_i$.
Also, by the induction hypothesis
\begin{eqnarray*}
\omega_i&=&2\omega_{i-1}+\binom{2^{i-1}}{2}=2\left(\frac{1}{2}\binom{2^{i-1}}{2}-(i-1)2^{i-3}\right)+\binom{2^{i-1}}{2}
\\
&=&2^{i-1}(2^{i-1}-1)-(i-1)2^{i-2}=\frac{1}{2}\binom{2^i}{2}-i2^{i-2}.
\end{eqnarray*}
\end{proof}
Unfortunately, we know that this construction is not the best possible. For size 8, the tanglegram on Fig.~\ref{fig:Junsheng}
is shown with an optimal drawing and has one more crossings than our construction on  Fig.~\ref{fig:Evatangs}.
\begin{figure}[ht]
		\centering
			\begin{tikzpicture}
			[scale=1,xscale=1,inner sep=2pt,semithick,
			vertex/.style={fill=black,rectangle,inner sep=2pt},
			vertexb/.style={fill=black,circle,inner sep=1.5pt]},
			thickedge/.style={line width=0.75pt},
			plumpedge/.style={gray!60,line width=4pt}] 
\node[vertex] (u000) at (0,1.75) {};
\node[vertex] (u001) at (0,1.25) {};
\node[vertex] (u010) at (0,.75) {};
\node[vertex] (u011) at (0,0.25) {};
\node[vertex] (u100) at (0,-0.25) {};
\node[vertex] (u101) at (0,-.75) {};
\node[vertex] (u110) at (0,-1.25) {};
\node[vertex] (u111) at (0,-1.75) {};
\node[vertex] (w000) at (1,1.75) {};
\node[vertex] (w001) at (1,1.25) {};
\node[vertex] (w010) at (1,.75) {};
\node[vertex] (w011) at (1,0.25) {};
\node[vertex] (w100) at (1,-0.25) {};
\node[vertex] (w101) at (1,-.75) {};
\node[vertex] (w110) at (1,-1.25) {};
\node[vertex] (w111) at (1,-1.75) {};
\node[vertexb] (u00) at (-.5,1.5) {};
\node[vertexb] (u01) at (-.5,.5) {};
\node[vertexb] (u10) at (-0.5,-.5) {};
\node[vertexb] (u11) at (-0.5,-1.5) {};
\node[vertexb] (w00) at (1.5,1.5) {};
\node[vertexb] (w01) at (1.5,.5) {};
\node[vertexb] (w10) at (1.5,-.5) {};
\node[vertexb] (w11) at (1.5,-1.5) {};
\node[vertexb] (u0) at (-1.5,1) {};
\node[vertexb] (u1) at (-1.5,-1) {};
\node[vertexb] (w0) at (2.5,1) {};
\node[vertexb] (w1) at (2.5,-1) {};
\node[vertexb] (ue) at (-3.5,0) {};
\node[vertexb] (we) at (4.5,0) {};
\draw[thickedge] (u000)--(u00)--(u0)--(ue)--(u1)--(u11)--(u111);
\draw[thickedge] (w111)--(w11)--(w1)--(we)--(w0)--(w00)--(w000);
\draw[thickedge] (u00)--(u001);
\draw[thickedge] (w00)--(w001);
\draw[thickedge] (u11)--(u110);
\draw[thickedge] (w11)--(w110);
\draw[thickedge] (u0)--(u01)--(u010);
\draw[thickedge] (w0)--(w01)--(w010);
\draw[thickedge] (u1)--(u10)--(u101);
\draw[thickedge] (w1)--(w10)--(w101);
\draw[thickedge] (u01)--(u011);
\draw[thickedge] (w01)--(w011);
\draw[thickedge] (u10)--(u100);
\draw[thickedge] (w10)--(w100);
% MATCHING
\draw[thickedge] (u000)--(w000);
\draw[thickedge] (u001)--(w100);
\draw[thickedge] (u010)--(w010);
\draw[thickedge] (u011)--(w110);
\draw[thickedge] (u100)--(w011);
\draw[thickedge] (u101)--(w101);
\draw[thickedge] (u110)--(w001);
\draw[thickedge] (u111)--(w111);

\end{tikzpicture}
\caption{A tanglegram of size $8$ with tanglegram crossing number $9$. This is the maximum tanglegram crossing number for size 8, found by brute force search.}
\label{fig:Junsheng}
\end{figure}
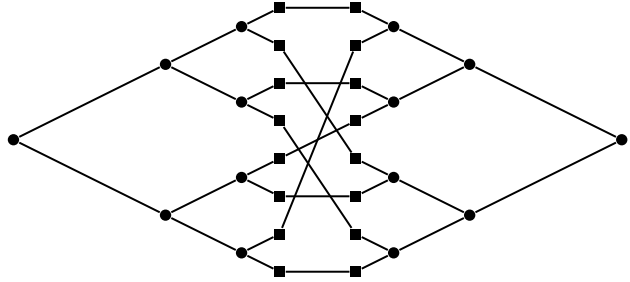

\section{Crossings in different layouts of the same tanglegram} \label{switches}
Let us be given a tanglegram $\T=(R,L,\sigma)$ of size $n$. Vertices of $R$ make a partially ordered set, for the following order: if $r$ is the root of $R$, then $x\leq y$, if $y$ is a vertex of the unique $rx$ path in $\T$.
This partial order is a semilattice, in which the least upper bound of vertices $u$ and $v$ is denoted by $\lca_R(u,v)$ ($\lca$ stands for least common ancestor in phylogenetics). Similar arguments apply for the
tree $L$, where the notation will be $\lca_L$. For $e,f$ matching edges, $\lca_R(e,f)$ (resp. $\lca_L(e,f)$) will denote the $\lca$ of the two leaves of $R$ adjacent to the edges $e$ and $f$ (resp. the $\lca$ of
the two leaves of $L$ adjacent to the edges $e$ and $f$). 

Consider a layout $\D_0$ of $\T$.  Assume that a layout $\D$ is obtained from $\D_0$ by making a switch in certain internal (non-leaf) vertices of $R$ and certain internal (non-leaf) vertices of $L$.  Note that changing the order of switches 
make no effect on $\D$.
Also note that each of $R$, $L$ has
exactly $n-1$ internal vertices. We denote the set of internal vertices by $\inter(R)$ and $\inter(L)$. 

 Define $\alpha$ ($\beta$)  on  $\inter(R)$  ($\inter(L)$) as 1, if no switch takes place in the vertex, and $-1$, if a switch takes place in the vertex. Fixing $\D_0$,  the
combinatorially different layouts $\D$ are in one-to-one correspondence with $(\alpha,\beta)$ pairs of $\pm 1$ valued functions.

Consider now two matching edges, $e,f$ of $\T$. Let $x=  \lca_R(e,f)$ and $u=\lca_L(e,f)$. Define the {\it crossing status} of matching 
edges $e,f$ in layout $\D$ as 
$$\chi_\D(e,f)= 
\begin{cases} -1  &  {\ \rm{if}\ } e {\rm \ crosses \ } f  \rm{\  in\ } \D; 
                         \cr 
                        1  & \rm{ \ otherwise}.
                        \end{cases}$$
Observe that $\chi_\D(e,f)= \chi_{\D_0}(e,f)$ if and only if $\alpha(x)\beta(u)=1$.
                        Therefore,
$$\chi_\D(e,f)= \alpha(\lca_R(e,f)) \beta (\lca_L(e,f))\chi_{\D_0}(e,f).$$
Counting the number of crossings in a layout $\D$, we have  
$$\Cr(\D)=\sum_{\{e,f\}}    \frac{1+\chi_\D(e,f)}{2}    .$$
We have
\begin{eqnarray} \label{firstpart}
\Cr(\D) & = & \sum_{\{e,f\}}    \frac{1+ \alpha(\lca_R(e,f)) \beta (\lca_L(e,f))\chi_{\D_0}(e,f)            }{2}    \\
 &=& \frac{1}{2} \binom{n}{2} + \frac{1}{2} \sum_{x\in \inter(R)}\sum_{u\in \inter(L) } \alpha(x)\beta(u)         \sum_{\{e,f\}:x=  \lca_R(e,f) \atop u=\lca_L(e,f) } \chi_{\D_0}(e,f).  \label{secpart}
\end{eqnarray}
To justify the claim in Section~\ref{intro} on the expected  number of crossings in a random layout of a labeled tanglegram, select randomly and independently the $\alpha$ and $\beta$ values to transform the fixed drawing $\D_0$
to transform it into the random drawing $\D$.
The displayed formula above implies $\EE[\Cr(\D)]=\frac{1}{2} \binom{n}{2}$.

\section{tools} \label{tools}
 
For a rooted binary  tree $T$, let $L(T)$ denote the set of its leaves. 
and set  $A(T)$ be the set of internal vertices that have a leaf neighbor. 
For $x\in A(T)$, set $\psi(x)=1$ if the number of leaves that are descendants of $x$ is even, 
$0$ otherwise, and
let $\psi_T=\sum_{x\in V(T)}\psi(x)$. 
Set $h(1)=0$ and for $n\ge 2$ let $h(n)=\min_{T:|L(T)|=n}\psi_T$. A tree $T$ with $n$ leaves is called a {\em realizer},  if $\psi_T=h(n)$. 

\begin{claim} \label{1vsodd}
For any $n\ge 2$, $h(n)=\lfloor\frac{n}{4}\rfloor+1$. In words, in any rooted binary tree with $n$ leaves, at least $\lfloor\frac{n}{4}\rfloor+1$ vertices have a leaf neighbor and an even number of leaf descendants.
\end{claim}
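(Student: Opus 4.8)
The plan is to establish a recursion for $h(n)$ by decomposing a tree at its root, and then to verify that the closed form $\lfloor n/4\rfloor+1$ satisfies that recursion by strong induction on $n$.

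First I would record the structural fact that both ingredients of $\psi_T$ are hereditary: for a vertex $v$ lying in a subtree $S$ of $T$, the number of leaf-descendants of $v$ is the same whether computed in $S$ or in $T$, and $v$ has a leaf child in $S$ iff it has one in $T$. Hence, if $T$ has root $\rho$ with the two subtrees $T_1,T_2$ hanging off $\rho$ (with $n_1,n_2$ leaves, $n_1+n_2=n$), then $\psi_T=\psi_{T_1}+\psi_{T_2}+\psi(\rho)$, where $\psi_{T_i}=0$ when $T_i$ is a single leaf, and $\rho$ contributes exactly when it is adjacent to a leaf and $n$ is even. This gives three cases: (i) both $n_i\ge 2$, where $\rho\notin A(T)$ and $\psi_T=\psi_{T_1}+\psi_{T_2}$; (ii) exactly one child is a leaf, i.e.\ some $n_i=1$, where $\psi_T=\psi_{T_{3-i}}+[n\text{ even}]$; and (iii) $n=2$, a cherry, with $\psi_T=1$. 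Minimizing over all trees therefore yields, for $n\ge 3$, the recursion
$$h(n)=\min\Big\{\min_{\substack{a+b=n\\ a,b\ge 2}}\big(h(a)+h(b)\big),\ h(n-1)+[n\text{ even}]\Big\},$$
with base values $h(1)=0$ and $h(2)=1$.

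Writing $\phi(n):=\lfloor n/4\rfloor+1$, the lower bound $h(n)\ge\phi(n)$ would follow by induction from two elementary inequalities. For the split term I would invoke the standard floor bound $\lfloor a/4\rfloor+\lfloor b/4\rfloor\ge\lfloor(a+b)/4\rfloor-1$, which gives $\phi(a)+\phi(b)=\lfloor a/4\rfloor+\lfloor b/4\rfloor+2\ge\lfloor n/4\rfloor+1=\phi(n)$. For the pendant term I would check $\phi(n-1)+[n\text{ even}]\ge\phi(n)$ directly across the residues of $n$ modulo $4$ (it is an equality in every class except $n\equiv 2\pmod 4$, where it is strictly larger). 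For the matching upper bound I exhibit, recursively, a tree attaining equality: when $n\not\equiv 2\pmod 4$ the pendant construction (attach a single leaf at a new root above a realizer on $n-1$ leaves) is optimal, and when $n\equiv 2\pmod 4$ (so $n\ge 6$) the split $a=3,\ b=n-3$ into two parts that are both $\equiv 3\pmod 4$ is optimal; in both cases the value equals $\phi(n)$ by the induction hypothesis. Together with the base case $h(2)=1$ this closes the induction (for $n=3$ the split branch is vacuous and the pendant branch applies).

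The main obstacle is the arithmetic bookkeeping needed to show that $\phi$ really is the pointwise minimum of the recursion. The intuition is that subtrees of size $\equiv 3\pmod 4$ (whose $\phi$-value is close to a quarter of their size) are the efficient building blocks, while adding a pendant leaf is "free" only when it keeps the ancestor's leaf-descendant count odd. Making this precise amounts to the case analysis modulo $4$ for the pendant term and to the tightness of the floor inequality for the split term. A secondary subtlety to handle carefully is that the class $n\equiv 2\pmod 4$ genuinely forces an internal split (the pendant move is strictly suboptimal there), so the extremal trees are not a single uniform caterpillar-type family but instead alternate between the pendant and split moves according to $n\bmod 4$.
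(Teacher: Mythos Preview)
Your proposal is correct and follows essentially the same approach as the paper: decompose at the root, use (strong) induction on the number of leaves, and carry out a case analysis modulo $4$ to verify both the lower bound and the existence of realizers. Your presentation is somewhat cleaner than the paper's—by isolating the ``pendant'' case $k=1$ explicitly and writing down a closed recursion for $h(n)$ before plugging in $\phi(n)=\lfloor n/4\rfloor+1$, you avoid the paper's intertwined analysis of the cases $r'\le r$ versus $r'>r$—but the underlying ideas and the extremal constructions coincide.
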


\begin{proof}
Note that $h(1)=0=\lfloor\frac{1}{4}\rfloor$. Let $n\ge 2$ and write $n=4q+r$ where $q=\lfloor\frac{n}{4}\rfloor$ and $0\le r<q$. We will show that 
 $h(n)=q+1$ by induction on $q$.

If $q=0$, then $n=r$. As $n\ge 2$, $r\in\{2,3\}$.
In these cases there is only one rooted binary tree on $n$ vertices.
Clearly, $h(2)=1$ and $h(3)=1$, so the claim is true.

Let $q>0$ and assume that the statement is true for all trees with $n^{\prime}$ leaves, where $2\le n^{\prime}<4q$.

Take a tree $T$ on $n$ vertices, and let $T_1,T_2$ be the 
subtrees rooted at the two neighbors of the root. Without loss of generality $1\leq k=|L(T_1)|\le|L(T_2)|=n-k\leq n-1$.
Set $q^{\prime}=\lfloor\frac{k}{4}\rfloor$, $r^{\prime}=k-4q^{\prime}$, $q^{\prime\prime}=\lfloor\frac{n-k}{4}\rfloor$ and $r''=(n-k)-4q^{\prime\prime}$.

When $0\le r^{\prime}\le r$ we get $r''=r-r'$, $q=q^{\prime}+q^{\prime\prime}$: 
 $\psi_T\ge\psi_{T_1}+\psi_{T_2}\ge q^{\prime}+(q-q^{\prime})+1=q+1$.  
The first inequality is an equality iff ($n$ is odd or ($n$ is even and $k\ne 1$)), and
 the second inequality is an equality iff $k=1$ and $T_2$ is a realizer.
Thus, for an odd $n$,
$h(n)\le q+1$  is obtained by choosing a tree $T$ such that $T_1$ is a single vertex and $T_2$ is a realizer with $n-1$ leaves.

When $r<r'\le 3$ we get $r''=4+r-r'$, $q^{\prime\prime}=q-q^{\prime}-1$, 
 $\psi_T\ge\psi_{T_1}+\psi_{T_2}\ge q^{\prime}+(q-q^{\prime}-1)+1=q$, the first inequality is an equality iff ($n$ is odd or ($n$ is even and $k\ne 1$)), the second inequality is an equality iff $k=1$ and $T_2$ is a realizer.
From $r<r'\le 3$ we get that if $n$ is odd, then $r=1$ and consequently $k\ge r'>1$; so for odd $n$ this gives that $h(n)\ge q+1$, and by the remark at the end of the previous paragraph, $h(n)=q+1$ for odd $n$.
If $n$ is even then $r\in\{0,2\}$. When $r=0$ then equality can not hold in both places, so $h(4q)\ge q+1$. Chosing $T$ with $4q$ leaves such that 
$T_1$ is single vertes and $T_2$ is a realizer with $4q-1-1$ leaves,
we get $\psi(T)=q+1$, so $h(4q)=q+1$.
When $r=2$, $r'=3$, consequently $k>1$. This gives $h(4q+2)\ge q+1$. Let $T_1$ be the tree on $3$ leaves and $T_2$ be a realizer
onb $4q-1$ leaves, then $\psi(T)=q+1$, so $h(4q+2)=q+1$.
\end{proof}

\section{unbalancing lights} \label{unbalance}
Alon and Spencer \cite{probmeth} contains the following theorem:
\begin{theorem} \label{as}
Let $a_{ij}=\pm 1$ for $1\leq i,j\leq n$. Then there exists $x_i,y_j=\pm 1$, $1\leq i,j\leq n$, so that 
\begin{equation} \label{pm1quote}
\sum_{i=1}^n \sum_{j=1}^n a_{ij}x_iy_j \geq \Bigl( \sqrt{\frac{2}{\pi}}+o(1)\Bigl)n^{3/2}.
\end{equation}
\end{theorem}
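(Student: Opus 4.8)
The plan is to use the probabilistic method, choosing the column signs at random and then picking the row signs optimally. First I would let $y_1,\dots,y_n$ be independent uniform $\pm 1$ random variables, and for each row $i$ set $R_i=\sum_{j=1}^n a_{ij}y_j$. Once the $y_j$ are fixed, the left-hand side of \eqref{pm1quote} equals $\sum_i x_i R_i$, which is maximized over $x_i=\pm 1$ by taking $x_i=\sign(R_i)$; this yields the value $\sum_{i=1}^n |R_i|$. Thus it suffices to show that $\EE\bigl[\sum_i |R_i|\bigr]\ge(\sqrt{2/\pi}+o(1))n^{3/2}$, because some outcome of the $y_j$ must attain at least the expectation, and for that outcome the corresponding optimal $x_i$ realize the bound.

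By linearity, $\EE[\sum_i |R_i|]=\sum_i \EE|R_i|$. For each fixed $i$, since $y_j$ is uniform on $\{\pm 1\}$ and $a_{ij}\in\{\pm 1\}$, the products $a_{ij}y_j$ are themselves independent uniform $\pm 1$ variables, so $R_i$ has the same distribution as a simple $\pm 1$ random walk $S_n=\sum_{j=1}^n \epsilon_j$ after $n$ steps. Hence every term is the same, $\EE|R_i|=\EE|S_n|$, giving $\EE[\sum_i |R_i|]=n\,\EE|S_n|$.

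The crux is the estimate $\EE|S_n|=(\sqrt{2/\pi}+o(1))\sqrt n$. I would obtain this either from the exact formula $\EE|S_n|=n\,2^{-(n-1)}\binom{n-1}{\lfloor(n-1)/2\rfloor}$ followed by Stirling's approximation, or more conceptually from the central limit theorem: $S_n/\sqrt n$ converges in distribution to a standard normal $Z$, and since $\EE[(S_n/\sqrt n)^2]=1$ the family $\{|S_n|/\sqrt n\}$ is uniformly integrable, so $\EE|S_n/\sqrt n|\to \EE|Z|=\sqrt{2/\pi}$. Combining, $\EE[\sum_i|R_i|]=n\cdot(\sqrt{2/\pi}+o(1))\sqrt n=(\sqrt{2/\pi}+o(1))n^{3/2}$, and the probabilistic argument of the first paragraph then completes the proof.

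I expect the only delicate point to be this asymptotic evaluation of $\EE|S_n|$: getting the \emph{exact} constant $\sqrt{2/\pi}$ (the mean of a half-normal) rather than merely a $\Theta(\sqrt n)$ lower bound is what forces either the precise binomial identity with Stirling or the uniform-integrability justification of passing the absolute value through the central limit theorem. Everything else is bookkeeping via linearity of expectation and the pointwise optimality of $x_i=\sign(R_i)$.
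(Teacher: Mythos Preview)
Your argument is correct and is essentially the standard probabilistic proof. Note, however, that the paper does not give its own proof of this theorem: it simply quotes the result from Alon and Spencer \cite{probmeth} and uses it as a template. The proof you outline---randomize the $y_j$, choose $x_i=\sign(R_i)$, reduce to $n\,\EE|S_n|$, and invoke $\EE|S_n|\sim\sqrt{2/\pi}\,\sqrt{n}$---is exactly the argument presented in that reference, so there is nothing to compare.
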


Alon and Spencer \cite{probmeth} gives an amusing interpretation of Theorem~\ref{as}, which explains the title  of this section:
``Let an $n\times n$ array of lights be given, each either on ($a_{ij}=+1$)  or off ($a_{ij}=-1$). Suppose for each row and each column there is a switch so that 
if the switch is pulled ($x_{i}=-1$  for  row $i$ and   $y_{j}=-1$ for column $j$) all of the lights in that line are `switched`: on to off and off to on. Then for any initial
configuration it is possible to perform switches so that the number of lights on minus the number of lights off is at least $\Bigl( \sqrt{\frac{2}{\pi}}+o(1)\Bigl)n^{3/2}$."

Clearly, redefining all $y_j$ to their negative, one obtains from (\ref{pm1quote}) 
\begin{equation} \label{pmquote}
\sum_{i=1}^n \sum_{j=1}^n a_{ij}x_iy_j \leq -\Bigl( \sqrt{\frac{2}{\pi}}+o(1)\Bigl)n^{3/2}.
\end{equation}
If we want to find a drawing $\D$ of the tanglegram $\T$, where the number of crossings   $\Cr(\D) $  in  $\D$ is way below  $\frac{1}{2} \binom{n}{2} $ in (\ref{firstpart})--(\ref{secpart}), then
the formulation (\ref{pmquote}) of Theorem~\ref{as} is absolutely relevant---except that instead of $a_{ij}=\pm 1$, we have to deal with
$$a_{xu} =  \sum_{\{e,f\}:x=  \lca_R(e,f) \atop u=\lca_L(e,f) } \chi_{\D_0}(e,f),$$
 in
\begin{equation}
\sum_{x\in \inter(R)}\sum_{u\in \inter(L) } \alpha(x)\beta(u)         \sum_{\{e,f\}:x=  \lca_R(e,f) \atop u=\lca_L(e,f) } \chi_{\D_0}(e,f).  \label{oursum}
\end{equation}
The difficulty is that now $a_{xu}$ may take other values than $\pm 1$, in fact, it is difficult to find many non-zero $a_{xu}$ terms. Therefore we were unable to utilize the probabilistic method.

Assume that $x\in R$ satisfies the Claim~\ref{1vsodd}, i.e., it has a leaf neighbor and has an even number of leaf descendants. 
We are going to call such $x\in {\rm int}(R)$ internal vertices as {\it special vertices}, and denote the set of them with $S$.
 Let $e$ be the leaf neighbor of the special vertex $x$ and let
$f_1,f_2,...,f_{2k+1}$ be the matching edges at  further leaf descendants of $x$. We have  
$$\sum_{u\in {\rm int}(L)} a_{xu}=\sum_{i=1}^{2k+1} \chi_{\D_0}(e,f_i) \not\equiv 0 \hbox{\  \ mod } 2.$$

Let us be given an arbitrary tanglegram $\T$ of size $n$. Without loss of generality, the layout $\D_0$ realizes the crossing number of
$\T$. Then for every $x\in S$, $\sum_{u\in {\rm int}(L)} a_{xu}<0$, as this sum is non-zero, and if it was 
positive, a switch in $x$ would yield a layout with strictly smaller number of crossings.

Consider now the following layout $\D_1$: switch in all  $u\in {\rm int}(L)$. %and in all $x\in S$. 
It is easy to see that
$$\Cr(\D_1)=\binom{n}{2} -\Cr(\D_0).$$
Now switch in layout $\D_1$ at every $x\in S$ vertex to obtain layout $\D_2$:
\begin{eqnarray*}
\Cr(\D_2)&=&\Cr(\D_1)+  2\sum_{x\in S}    \sum_{u\in {\rm int}(L)} a_{xu}\\
&=&     \binom{n}{2} -\Cr(\D_0)+  2\sum_{x\in S}    \sum_{u\in {\rm int}(L)} a_{xu}\\
&\leq & \binom{n}{2} -\Cr(\D_0)-2|S|.
\end{eqnarray*}

Hence 
$$\Cr(\T)= \Cr(\D_0)\leq \frac{1}{2}\left(\Cr(\D_0)+\Cr(\D_2)\right)\leq \frac{1}{2}\binom{n}{2}- |S|\leq
\frac{1}{2}\binom{n}{2}- \frac{n}{4}
$$
by Claim~\ref{1vsodd}.


\begin{thebibliography}{10}
\old{
\bibitem{bansal} M.S.~Bansal, Wen-Chieh~Chang, O.~Eulenstein, and D.~Fernandez-Baca,
\newblock ``Generalized binary tanglegrams: algorithms and
applications,"   
\newblock Bioinformatics and Computational Biology, First International Conference, BICoB 2009, New Orleans, LA, USA, April 8--10, 2009,  
Proceedings, S.~Rajasekaran (Editor), Lecture Notes in Bioinformatics vol. 5462, Springer-Verlag, Berlin, Heidelberg   (2009), pp.114--125.
%
\bibitem{baumann}    F.~Baumann, C.~Buchheim, F.~Liers, ``Exact Bipartite Crossing Minimization under Tree Constraints,"
\newblock Experimental Algorithms, SEA 2010, P.~Festa (Editor), Lecture Notes in Computer Science vol. 6049. Springer, Berlin, Heidelberg (2010), pp. 118--128.
%
\bibitem{billey}
S.C.~Billey, M.~Konvalinka, and F.A.~Matsen,
\newblock On the enumeration of tanglegrams and tangled chains,
\newblock {J. Combinatorial Theory Ser. A}, 146(2017), 239--263.
}
\bibitem{bocker}    S.~B\"ocker, F.~H\"uffner, A.~Truss, M.~Wahlstr\"om. 
\newblock ``A faster fixed-parameter approach to drawing binary tanglegrams,"
 \newblock  Parameterized and Exact Computation
4th International Workshop, IWPEC 2009, Copenhagen, Denmark, September 10-11, 2009, Revised Selected Papers,
 J.~Chen, F.V.~Fomin (Editors), Lecture Notes in Computer Science vol. 5917, Springer, Berlin, Heidelberg (2009) pp. 38--49. 


\bibitem{buchin} K.~Buchin, M.~Buchin, J.~Byrka, M.~N\"ollenburg, Y.~Okamoto, R.I.~Silveira, A.~Wolff,
\newblock Drawing (complete) binary tanglegrams. Hardness, Approximation, Fixed-Parameter Tractability,
\newblock Algorithmica 62(2012), 309--332.

\bibitem{Burt}
A.~Burt and R.~Trivers,
\newblock {Genes in Conflict},
\newblock Belknap Harvard Press, Cambridga MA, 2006.




%\bibitem{induci} \'E.~Czabarka,  L.A.~Sz\'ekely, S.~Wagner, \newblock Inducibility in binary trees and crossings in tanglegrams, \newblock   {SIAM J. Discrete Math.}  31(3)(2017), 1732--1750.
% arXiv:1601.07149.
 
% \bibitem{diestel} R.~Diestel, Graph Theory, Fourth Edition, Graduate Texts in Mathematics 173, Springer-Verlag, Berlin, Heidelberg, 2010.
 
% \bibitem{f}I.~F\'ary, On straight line representations of graphs, \newblock { Acta Univ.Szeged Sect. Sci. Math.} {11}(1948), 229--233.


\bibitem{fernau}
H.~Fernau, M.~Kaufmann, and M.~Poths, 
\newblock Comparing trees via crossing minimization,
%In {Proc. 25th Intern. Conf. Found. Softw. Techn. Theoret. Comput. Sci. (FSTTCS'05)},
%Lecture Notes in Computer Science 3821:457--469, Springer-Verlag, 2005; journal version 
\newblock {J. Computer System Sciences}  76(7)(2010),   593--608.
  
\bibitem{HafnerNadler}
M.S.~Hafner and S.A.~Nadler,
\newblock Phylogenetic trees support the coevolution of parasites and their
  hosts,
\newblock {Nature}, 332(1988), 258--259.


%\bibitem{Henzinger}    M.R.~Henzinger, V.~King and T.~Warnow,
%\newblock     Constructing a tree from homeomorphic subtrees, with applications to computational evolutionary biology, 
%\newblock {Algorithmica} 24(1)(1999), 1--13.


%\bibitem{KonvWag} M.~Konvalinka and S.~Wagner, \newblock The shape of random tanglegrams, \newblock {Adv. Appl. Mathematics} 78(2016), 76--93.


%\bibitem{Kurat} K.~Kuratowski, \newblock Sur le probl\`eme des courbes gauches en topologie, {Fund. Math.} 15(1930), 271--283.


%\bibitem{Matsen} F.A.~Matsen, S.C. Billey, A.~Kas, and M.~Konvalinka,
%\newblock Tanglegrams: a reduction tool for mathematical phylogenetics,
%\newblock {IEEE/ACM Transactions on Computational Biology and Bioinformatics}, 15(1)(2016), 343--349.

%\bibitem{nburg} M.~N\"ollenburg, M.~V\"olker, A.~Wolff, D.~Holten,
%\newblock ``Drawing binary tanglegrams: an experimental evaluation,''
%\newblock 	11th Workshop on Algorithm Engineering and Experiments and 6th Workshop on Analytic Algorithmics and Combinatorics 2009 (ALENEX 09/ANALCO 09),J. Hershberger and I. Finocchi, (Editors), SIAM (2009), pp.106--119.



\bibitem{page} R.D.M.~Page, (Editor) Tangled  Trees. Phylogeny, Cospeciation and Coevolution, University of Chicago Press, Chicago IL, 2002.

%\bibitem{RRW} D.~Ralaivaosaona, J.B.~Ravelomanana, and S.~Wagner, \newblock Enumeration of planar tanglegrams,\newblock in preparation.

%\bibitem{schaefersurvey} M.~Schaefer, The graph crossing number and its variants: a survey, Electronic J. Combinatorics Dynamic Survey \#DS21, (2013).

%\bibitem{schaeferbook} M.~Schaefer, Crossing Numbers of Graphs, CRC Press, Boca Raton FL,  2018.

\bibitem{scornavacca} C.~Scornavacca, F.~Zickmann, D.H.~Huson,
\newblock Tanglegrams for rooted phylogenetic trees and networks,
\newblock Bioinformatics. 27(13)(2011), 248--256.

%\bibitem{SempleSteel} C.~Semple, M.A.~Steel,\newblock {Phylogenetics,} Oxford University Press, Oxford, England, 2003.

%\bibitem{measure} L.A.~Sz\'ekely, \newblock A successful concept for measuring non-planarity of graphs: the crossing number, \newblock {Discrete Math.}  276(1--3)(2004), 331--352.



\bibitem{St.John} B.~Venkatachalam, J.~Apple, K.~St.~John, and D.~Gusfield, \newblock Untangling tanglegrams: comparing trees by their drawings, \newblock {IEEE/ACM Transactions on Computational Biology and Bioinformatics}, 7(4)(2010), 588--597.

%\bibitem{wagner} K.~Wagner, \newblock  \"Uber eine Eigenschaft der ebenen Komplexe, \newblock {Math. Ann.} 114(1937), 570--590.


\bibitem{probmeth}
N.\ Alon and J.H.\ Spencer, \textit{The Probabilistic Method}, third edition,
(John Wiley and Sons, New York, 2008).

%\bibitem{godsil} C.D. Godsil, 
% An Introduction to the Moebius Function, arXiv:1803.06664v1 [math.CO] 18 Mar 2018, 49 pages
 


\bibitem{MRCtangle} Robin Anderson, Shuliang Bai, Fidel Barrera-Cruz, \'Eva Czabarka, Giordano Da Lozzo, Natalie L.F. Hobson,
Jephian C.-H. Lin, Austin Mohr, Heather C. Smith,  L\'aszl\'o A. Sz\'ekely,  Hays Whitlatch, 
\newblock Analogies between the crossing number and the tangle crossing number,   {\it Electronic J. Comb.} 
{\bf 25}(4) (2018)  \#P4.24 (15 pages) 
\old{
\bibitem{PZ} R.E.A.C. Paley, A. Zygmund,
\newblock  A note on analytic functions in the unit circle,
\newblock Proc. Camb. Phil. Soc. 28(3)(1932) 266--272.
%
 \bibitem{mindeg2} T.C. Edwin Chenga, Yonglin Guob, Shenggui Zhanga, Yongjun Du,
\newblock Extreme values of the sum of squares of degrees of bipartite graphs,
\newblock Discrete Math. 309 (2009) 1557--1564.
%
 \bibitem{abq}
L.A. Sz\'ekely, L.H. Clark, R.C. Entringer, 
\newblock An inequality for degree sequences, 
\newblock Discrete Math. 103 (1992) 293--300.
%
\bibitem{putnam}  \url{https://prase.cz/kalva/putnam/putn74.html}
%
\bibitem{komlossulyok}
J. Koml\'os, M. Sulyok,  
\newblock  On the sum of elements of $ \pm 1 $ matrices,  in: Combinatorial theory and its applications (Proc. Colloq., Balatonf\"ured, 1969) v.2, eds. P. Erd\H os, A. R\'enyi, and Vera T. S\'os,    Colloquia Mathematica Societatis J\'anos Bolyai 4,  721--728, distributed by North-Holland Pub. Co.,  1970 .
%
\bibitem{brownspencer} T.A. Brown, J.H. Spencer, 
\newblock Minimization of $ \pm 1$  matrices under line shifts
\newblock Colloquium Mathematicum 23(1971), 165--171.
}
\end{thebibliography}
\end{document}